\begin{document}

\title{Spectral Compressed Sensing via CANDECOMP/PARAFAC Decomposition of Incomplete Tensors}

\author{Jun Fang, Linxiao Yang, and Hongbin Li,~\IEEEmembership{Senior
Member,~IEEE}
\thanks{Jun Fang, and Linxiao Yang are with the National Key Laboratory
of Science and Technology on Communications, University of
Electronic Science and Technology of China, Chengdu 611731, China,
Email: JunFang@uestc.edu.cn}
\thanks{Hongbin Li is
with the Department of Electrical and Computer Engineering,
Stevens Institute of Technology, Hoboken, NJ 07030, USA, E-mail:
Hongbin.Li@stevens.edu}
\thanks{This work was supported in part by the National Science
Foundation of China under Grant 61172114, and the National Science
Foundation under Grant ECCS-1408182. }}

\maketitle

\begin{abstract}
We consider the line spectral estimation problem which aims to
recover a mixture of complex sinusoids from a small number of
randomly observed time domain samples. Compressed sensing methods
formulates line spectral estimation as a sparse signal recovery
problem by discretizing the continuous frequency parameter space
into a finite set of grid points. Discretization, however,
inevitably incurs errors and leads to deteriorated estimation
performance. In this paper, we propose a new method which
leverages recent advances in tensor decomposition. Specifically,
we organize the observed data into a structured tensor and cast
line spectral estimation as a CANDECOMP/PARAFAC (CP) decomposition
problem with missing entries. The uniqueness of the CP
decomposition allows the frequency components to be super-resolved
with infinite precision. Simulation results show that the proposed
method provides a competitive estimate accuracy compared with
existing state-of-the-art algorithms.
\end{abstract}


\begin{keywords}
CANDECOMP/PARAFAC decomposition, line spectral estimation, super
resolution.
\end{keywords}



\section{Introduction}
The problem of recovering the frequency components of a mixture of
complex sinusoids from a finite number of time samples arises in a
variety of applications, such as radar, sonar, array signal
processing and seismology. Such a problem has been extensively
investigated over the past decades and a number of classical
methods such as the MUSIC \cite{Schmidt86}, ESPRIT
\cite{RoyKailath89}, matrix-pencil \cite{HuaSarkar90}, and many
others were proposed in as early as 1980s. In these studies, the
shift invariance property of harmonic structures, i.e. the
subspace of a consecutive segment of time domain samples remains
unaltered irrespective of the starting point of the segment, was
usually exploited for algorithm development. These methods,
however, require that the sampling rate satisfies the
Nyquist-Shannon sampling theorem.





Another line of work that has attracted much attention recently is
to formulate line spectral estimation as a sparse signal recovery
(i.e. compressed sensing) problem. By exploiting the sparsity in
the frequency domain, compressed sensing techniques allow the
frequency components to be recovered from only a small, random
subset of uniformly spaced samples. The sampling rate can thus be
significantly reduced. Also, unlike classical methods
\cite{Schmidt86,RoyKailath89,HuaSarkar90} which assume the
knowledge of the number of frequency components, compressed
sensing methods are able to determine the model order in an
automatic manner. Nevertheless, to apply the compressed sensing
technique to the line spectral estimation problem, one has to
discretize the continuous parameter space into a finite set of
grid points and assumes that the true parameters lie on the
discretized grid. Grid mismatch arises when the true parameters
are inconsistent with the discretized grid, in which case
compressed sensing methods may incur a considerable performance
degradation. To address this issue, a class of off-grid (or
super-resolution) compressed sensing approaches were proposed,
e.g.
\cite{HuShi12,YangXie13,CandesGranda14,TangBhaskar13,BhaskarTang13}.
Specifically, in \cite{CandesGranda14,TangBhaskar13}, an atomic
norm-minimization approach was proposed to handle the infinite
dictionary with continuous atoms. It was shown that given that the
frequency components are sufficiently separated, the frequency
components of a mixture of complex sinusoids can be super-resolved
with infinite precision from only coarse-scale measurements. Also,
in \cite{ChenChi14,CaiLiu15}, by arranging the observed samples
into a low-rank Hankel matrix, a structured matrix completion
method was developed to recover the real-valued frequency
parameters.




In this paper, we propose a new method which organizes the
observed data into a structured tensor and cast line spectral
estimation as a CANDECOMP/PARAFAC (CP) decomposition problem with
missing entries. Due to the uniqueness of the CP decomposition,
the frequency components of a mixture of complex sinusoids is
guaranteed to be super-resolved from only a small number of
nonuniform samples. Simulation results show that the proposed
method provides competitive estimation performance compared with
existing state-of-the-art algorithms.

\section{Preliminaries}
We first provide a brief review on tensor and the CP
decomposition. A tensor is the generalization of a matrix to
higher order dimensions, also known as ways or modes. Vectors and
matrices can be viewed as special cases of tensors with one and
two modes, respectively.

Let $\boldsymbol{\mathcal{X}}\in\mathbb{R}^{I_1\times
I_2\times\cdots\times I_N}$ denote an $N$th order tensor with its
$(i_1,\ldots,i_N)$th entry denoted by $\mathcal{X}_{i_1\cdots
i_N}$. Here the order $N$ of a tensor is the number of dimensions.
Fibers are the higher-order analogue of matrix rows and columns.
The mode-$n$ fibers of $\boldsymbol{\mathcal{X}}$ are
$I_n$-dimensional vectors obtained by fixing every index but
$i_n$. Unfolding or matricization is an operation that turns a
tensor to a matrix. Specifically, the mode-$n$ unfolding of a
tensor $\boldsymbol{\mathcal{X}}$, denoted as
$\boldsymbol{X}_{(n)}$, arranges the mode-$n$ fibers to be the
columns of the resulting matrix. The $n$-mode product of
$\boldsymbol{\mathcal{X}}$ with a matrix
$\boldsymbol{A}\in\mathbb{R}^{J\times I_n}$ is denoted by
$\boldsymbol{\mathcal{X}}\times_n\boldsymbol{A}$ and is of size
$I_1\cdots\times I_{n-1}\times J\times I_{n+1}\times\cdots\times
I_N$, with each mode-$n$ fiber multiplied by the matrix
$\boldsymbol{A}$, i.e.
\begin{align}
\boldsymbol{\mathcal{Y}}=\boldsymbol{\mathcal{X}}\times_n\boldsymbol{A}\Leftrightarrow
\boldsymbol{Y}_{(n)}=\boldsymbol{A}\boldsymbol{X}_{(n)}
\end{align}

The CP decomposition decomposes a tensor into a sum of component
rank-one component tensors, i.e.
\begin{align}
\boldsymbol{\mathcal{X}}=
\sum\limits_{r=1}^{R}\lambda_r\boldsymbol{a}_r^{(1)}\circ\boldsymbol{a}_r^{(2)}\circ\cdots\circ\boldsymbol{a}_r^{(N)}
\end{align}
where $\boldsymbol{a}_r^{(n)}\in\mathbb{R}^{I_n}$, `$\circ$'
denotes the vector outer product, the minimum achievable $R$ is
referred to as the rank of the tensor, and
$\boldsymbol{A}^{(n)}\triangleq
[\boldsymbol{a}_{1}^{(n)}\phantom{0}\ldots\phantom{0}\boldsymbol{a}_{R_n}^{(n)}]\in\mathbb{R}^{I_n\times
R_n}$ denotes the factor matrix along the $n$-th mode.
Elementwise, we have
\begin{align}
\mathcal{X}_{i_1 i_2\cdots i_N}=\sum\limits_{r=1}^{R}\lambda_r
a_{i_1 r}^{(1)}a_{i_2 r}^{(2)}\cdots a_{i_N r}^{(N)}
\end{align}



\section{Tensor Formulation}
Consider the line spectral estimation problem where the signal
$x_n$ can be represented as a summation of a number of complex
sinusoids, i.e.
\begin{align}
x_n=\sum\limits_{k=1}^{K}a_k e^{-j\omega_k (n-1)},\quad
n=1,2,\dots,N
\end{align}
where $\omega_k\in[0,2\pi)$ and $a_k$ denote the frequency and
complex amplitude of the $k$-th component, respectively. Let
$\boldsymbol{y}\triangleq [y_1 \phantom{0}y_2
\phantom{0}\dots\phantom{0} y_M]^T$ denote the noise-corrupted
observations randomly chosen from the original set $\{x_n\}$. Our
objective is to estimate $\{a_k\}$ and $\{\omega_k\}$ from the
observed data $\boldsymbol{y}$. To this objective, we organize the
observed samples $\{y_m\}_{m=1}^M$ as an incomplete structured
third-order tensor. The unknown parameters $\{\omega_k\}$ along
with the missing entries can be estimated via the CP decomposition
of this incomplete tensor.

To better illustrate our method, we first show how to construct
the tensor using the original samples $\{x_n\}$ instead of the
observed samples $\{y_n\}$. In order to obtain a tensor which
admits a CP decomposition, we arrange samples $\{x_n\}$ to form a
third-order tensor
$\boldsymbol{\mathcal{X}}\in\mathbb{C}^{(N-L-P+2)\times L\times
P}$, with each slice along the third-mode of the tensor being an
$(N-L-P+2)\times L$ matrix, i.e.
\begin{align}
\boldsymbol{\mathcal{X}}(:,:,i)=[\boldsymbol{x}_{L+i-1}\phantom{0}\boldsymbol{x}_{L+i-2}\phantom{0}
\dots\phantom{0}\boldsymbol{x}_i]
\end{align}
where $L$ and $P$ are parameters whose choice will be discussed
later, and
\begin{align}
\boldsymbol{x}_t\triangleq
[x_t\phantom{0}x_{t+1}\phantom{0}\dots\phantom{0}x_{N-L-P+t+1}]^T
\quad t=1,\dots,L+P-1
\end{align}
By exploiting the inherent structure, each slice
$\boldsymbol{\mathcal{X}}(:,:,i)$ can be expressed as
\begin{align}
\boldsymbol{\mathcal{X}}(:,:,i)=\boldsymbol{A}\boldsymbol{D}_i\boldsymbol{B}^T
\quad i=1,\dots,P
\end{align}
where $\boldsymbol{A}\in\mathbb{C}^{(N-L-P+2)\times K}$,
$\boldsymbol{B}\in\mathbb{C}^{L\times K}$ and
$\boldsymbol{D}_i\in\mathbb{C}^{K\times K}$ are defined
respectively as
\begin{align}
\boldsymbol{A}\triangleq\begin{bmatrix}
1 & \cdots & 1\\
e^{-j\omega_1} & \cdots & e^{-j\omega_K}\\
\vdots&&\vdots\\
e^{-j\omega_1 (N-L-P+1)} & \cdots & e^{-j\omega_K (N-L-P+1)}\\
\end{bmatrix}
\end{align}
\begin{align}
\boldsymbol{B}\triangleq\begin{bmatrix}
e^{-j\omega_1 (L-1)} & \cdots & e^{-j\omega_K (L-1)}\\
e^{-j\omega_1 (L-2)} & \cdots & e^{-j\omega_K (L-2)}\\
\vdots&&\vdots\\
1 & \cdots & 1\\
\end{bmatrix}
\end{align}
and
\begin{align}
\boldsymbol{D}_i \triangleq \text{diag}(a_1 e^{-j\omega_1
(i-1)},a_2 e^{-j\omega_2 (i-1)},\cdots, a_K e^{-j\omega_K (i-1)})
\end{align}
Let $\boldsymbol{a}_k$ and $\boldsymbol{b}_k$ denote the $k$th
column of $\boldsymbol{A}$ and $\boldsymbol{B}$, respectively. The
slice $\boldsymbol{\mathcal{X}}(:,:,i)$ can be rewritten as
\begin{align}
\boldsymbol{\mathcal{X}}(:,:,i)=\sum_{k=1}^{K}a_k e^{-j\omega_k
(i-1)}\boldsymbol{a}_k\boldsymbol{b}_k^T
\end{align}
We see that each slice of $\boldsymbol{\mathcal{X}}$ is a weighted
sum of a common set of rank-one outer products. Hence the tensor
$\boldsymbol{\mathcal{X}}$ admits the following CP decomposition
which decomposes a tensor as a sum of component rank-one tensors,
i.e.
\begin{align}
\boldsymbol{\mathcal{X}}=\sum\limits_{k=1}^{K}\boldsymbol{a}_k\circ\boldsymbol{b}_k\circ\boldsymbol{c}_k
\end{align}
where $\boldsymbol{c}_k\triangleq [a_k\phantom{0} a_k
e^{-j\omega_k}\phantom{0}\ldots\phantom{0}a_k
e^{-j\omega_k(M-1)}]^T$. Define $\boldsymbol{C}\triangleq
[\boldsymbol{c}_1\phantom{0}\ldots\phantom{0}\boldsymbol{c}_K]$.
The matrices $\boldsymbol{A}$, $\boldsymbol{B}$ and
$\boldsymbol{C}$ are factor matrices associate with the tensor
$\boldsymbol{\mathcal{X}}$. Since $K$ is usually small, the above
factorization implies that the tensor $\boldsymbol{\mathcal{X}}$
has a low-rank structure.


\section{Algorithm Development}
When only the observations $\{y_m\}$, are available, we can
readily construct an incomplete third-order tensor
$\boldsymbol{\mathcal{Y}}$ by following the way we construct
$\boldsymbol{\mathcal{X}}$. By exploiting the low rank structure,
the missing entries of $\boldsymbol{\mathcal{Y}}$, along with the
factor matrices that contain information about the frequencies,
can be estimated. Specifically, the problem can be cast as
\begin{align}
\min_{\boldsymbol{\mathcal{X}}}&\quad
\text{rank}(\boldsymbol{\mathcal{X}}) \nonumber\\
\text{s.t.}&\quad
\|\boldsymbol{\mathcal{O}}\ast\boldsymbol{\mathcal{Y}}-
\boldsymbol{\mathcal{O}}\ast\boldsymbol{\mathcal{X}})\|_F^2\leq\varepsilon
\label{opt-1}
\end{align}
where $\boldsymbol{\mathcal{O}}$ is a binary tensor of the same
size as $\boldsymbol{\mathcal{X}}$ with $\mathcal{O}_{ijk}=1$ if
$\mathcal{X}_{ijk}$ is observed, and $\mathcal{O}_{ijk}=0$
otherwise. $\varepsilon$ is an error tolerance parameter related
to noise statistics. Note that the CP rank is the minimum number
of rank-one tensor components required to represent the tensor.
Thus the search of a low rank $\boldsymbol{\mathcal{X}}$ can be
converted to optimization of its associated factor matrices. Let
\begin{align}
\boldsymbol{\mathcal{X}}=\sum\limits_{k=1}^{\tilde{K}}\boldsymbol{\tilde{a}}_k\circ
\boldsymbol{\tilde{b}}_k\circ\boldsymbol{\tilde{c}}_k
\end{align}
where $\tilde{K}\gg K$, and
\begin{align}
\boldsymbol{\tilde{A}}\triangleq &
[\boldsymbol{\tilde{a}}_1\phantom{0}\ldots\phantom{0}\boldsymbol{\tilde{a}}_{\tilde{K}}]
\nonumber\\
\boldsymbol{\tilde{B}}\triangleq &
[\boldsymbol{\tilde{b}}_1\phantom{0}\ldots\phantom{0}\boldsymbol{\tilde{b}}_{\tilde{K}}]
\nonumber\\
\boldsymbol{\tilde{C}}\triangleq &
[\boldsymbol{\tilde{c}}_1\phantom{0}\ldots\phantom{0}\boldsymbol{\tilde{c}}_{\tilde{K}}]
\nonumber
\end{align}
The optimization (\ref{opt-1}) can be re-expressed as
\begin{align}
\min_{\boldsymbol{\tilde{A}},\boldsymbol{\tilde{B}},\boldsymbol{\tilde{C}}}&\quad
\|\boldsymbol{z}\|_0 \nonumber\\
\text{s.t.} &\quad
\|\boldsymbol{\mathcal{O}}\ast\boldsymbol{\mathcal{Y}}-
\boldsymbol{\mathcal{O}}\ast\boldsymbol{\mathcal{X}})\|_F^2\leq\varepsilon
\nonumber\\
&\quad
\boldsymbol{\mathcal{X}}=\sum\limits_{k=1}^{\tilde{K}}\boldsymbol{\tilde{a}}_k\circ
\boldsymbol{\tilde{b}}_k\circ\boldsymbol{\tilde{c}}_k
\label{opt-2}
\end{align}
where $\boldsymbol{z}$ is a $\tilde{K}$-dimensional vector with
its $k$th entry given by
\begin{align}
z_{k}\triangleq\|\boldsymbol{\tilde{a}}_k\circ
\boldsymbol{\tilde{b}}_k\circ\boldsymbol{\tilde{c}}_k\|_{F}
\end{align}
We see that $\|\boldsymbol{z}\|_{0}$ equals to the number of
nonzero rank-one tensor components. Therefore minimizing the
$\ell_0$-norm of $\boldsymbol{z}$ is equivalent to minimizing the
rank of the tensor $\boldsymbol{\mathcal{X}}$.

The optimization (\ref{opt-2}) is an NP-hard problem.
Nevertheless, alternative sparsity-promoting functions such as
$\ell_1$-norm can be used to replace $\ell_0$-norm to find a
sparse solution of $\boldsymbol{z}$ more computationally
efficient. Based on the idea of placing sparsity on the rank-one
tensor components, a few CP decomposition algorithms were recently
proposed via either optimization techniques
\cite{MateosGiannakis13} or probabilistic model learning
\cite{RaiWang14,ZhaoZhang15}. We have no intention to develop a
new algorithm in this paper as our objective is to show how to
formulate the line spectral estimation problem as a low rank CP
decomposition problem. Once an estimate of the factor matrices is
obtained, the underlying frequencies can be easily identified
since all three factor matrices are Vandermonde matrices and each
column of the Vandermonde matrix is associated with an individual
frequency parameter.


\section{Uniqueness of CP Decomposition}
Although the factor matrices have a specific structure, we do not
need to impose a specific structure on the estimates of the factor
matrices since the CP decomposition is unique under very mild
conditions. It is well know that essential uniqueness of the CP
decomposition can be guaranteed by the Kruskal's condition
\cite{StegemanSidiropoulos07}. Let $k_{\boldsymbol{X}}$ denote the
k-rank of a matrix $\boldsymbol{X}$, which is defined as the
largest value of $k_{\boldsymbol{X}}$ such that every subset of
$k_{\boldsymbol{X}}$ columns of the matrix $\boldsymbol{X}$ is
linearly independent. Kruskal showed that a CP decomposition
$(\boldsymbol{A},\boldsymbol{B},\boldsymbol{C})$ of a three-order
tensor is essentially unique if
\begin{align}
k_{\boldsymbol{A}}+k_{\boldsymbol{B}}+k_{\boldsymbol{C}}\geq 2R+2
\label{Kruskals-condition}
\end{align}
where $\boldsymbol{A},\boldsymbol{B},\boldsymbol{C}$ are factor
matrices, $R$ denotes the CP rank. More formally, we have the
following theorem.

\newtheorem{theorem}{Theorem}
\begin{theorem} \label{theorem1}
Let $(\boldsymbol{A},\boldsymbol{B},\boldsymbol{C})$ be a CP
solution which decomposes a three-mode tensor
$\boldsymbol{\mathcal{X}}$ into $R$ rank-one arrays. Suppose
Kruskal's condition (\ref{Kruskals-condition}) holds and we have
an alternative CP solution
$(\boldsymbol{\bar{A}},\boldsymbol{\bar{B}},\boldsymbol{\bar{C}})$
also decomposing $\boldsymbol{\mathcal{X}}$ into $R$ rank-one
arrays. Then there holds
$\boldsymbol{\bar{A}}=\boldsymbol{A}\boldsymbol{\Pi}\boldsymbol{\Lambda}_a$,
$\boldsymbol{\bar{B}}=\boldsymbol{B}\boldsymbol{\Pi}\boldsymbol{\Lambda}_b$,
and
$\boldsymbol{\bar{C}}=\boldsymbol{C}\boldsymbol{\Pi}\boldsymbol{\Lambda}_c$,
where $\boldsymbol{\Pi}$ is a unique permutation matrix and
$\boldsymbol{\Lambda}_a$, $\boldsymbol{\Lambda}_b$, and
$\boldsymbol{\Lambda}_c$ are unique diagonal matrices such that
$\boldsymbol{\Lambda}_a\boldsymbol{\Lambda}_b\boldsymbol{\Lambda}_c=\boldsymbol{I}$.
\end{theorem}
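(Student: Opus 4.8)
The plan is to prove the theorem by way of Kruskal's permutation lemma, the standard engine behind this uniqueness result, following the development in \cite{StegemanSidiropoulos07}. First I would exploit that the two factorizations describe the \emph{same} tensor. Forming an arbitrary linear combination of the frontal (mode-$3$) slices with weight vector $\boldsymbol{t}$, the identity $\sum_r\boldsymbol{a}_r\circ\boldsymbol{b}_r\circ\boldsymbol{c}_r=\sum_r\boldsymbol{\bar{a}}_r\circ\boldsymbol{\bar{b}}_r\circ\boldsymbol{\bar{c}}_r$ yields, for every $\boldsymbol{t}$,
\begin{align}
\boldsymbol{A}\,\text{diag}(\boldsymbol{C}^{T}\boldsymbol{t})\,\boldsymbol{B}^{T}
=\boldsymbol{\bar{A}}\,\text{diag}(\boldsymbol{\bar{C}}^{T}\boldsymbol{t})\,\boldsymbol{\bar{B}}^{T}.
\label{eq:sliceid}
\end{align}
This converts the tensor equality into a parameterized family of matrix equalities from which all structural information is read off by choosing $\boldsymbol{t}$ judiciously. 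It then suffices to prove $\boldsymbol{\bar{C}}=\boldsymbol{C}\boldsymbol{\Pi}\boldsymbol{\Lambda}_{c}$ for a single permutation $\boldsymbol{\Pi}$ and nonsingular diagonal $\boldsymbol{\Lambda}_{c}$; the analogous statements for $\boldsymbol{A}$ and $\boldsymbol{B}$ follow verbatim because Kruskal's condition \eqref{Kruskals-condition} is symmetric in the three modes, so the roles of $\boldsymbol{A},\boldsymbol{B},\boldsymbol{C}$ may be cyclically interchanged.

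The heart of the argument is a rank estimate fed into the permutation lemma. For a weight vector $\boldsymbol{t}$ write $\boldsymbol{d}=\boldsymbol{C}^{T}\boldsymbol{t}$ and let $w$ be its number of nonzero entries. Restricting \eqref{eq:sliceid} to the support of $\boldsymbol{d}$ and using that any $k_{\boldsymbol{A}}$ columns of $\boldsymbol{A}$ and any $k_{\boldsymbol{B}}$ columns of $\boldsymbol{B}$ are linearly independent, a Sylvester-type bound gives $\text{rank}(\boldsymbol{A}\,\text{diag}(\boldsymbol{d})\,\boldsymbol{B}^{T})\ge\min(w,k_{\boldsymbol{A}})+\min(w,k_{\boldsymbol{B}})-w$, with the rank equal to exactly $w$ when $w\le\min(k_{\boldsymbol{A}},k_{\boldsymbol{B}})$, since the left-hand side is a sum of $w$ rank-one matrices. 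Comparing this against the rank computed from the right-hand side of \eqref{eq:sliceid}, and invoking $k_{\boldsymbol{A}}+k_{\boldsymbol{B}}+k_{\boldsymbol{C}}\ge 2R+2$, I would extract the support-containment property: wherever $\boldsymbol{C}^{T}\boldsymbol{t}$ vanishes, $\boldsymbol{\bar{C}}^{T}\boldsymbol{t}$ vanishes as well, so the zero set of the former is nested in that of the latter for a sufficiently rich family of $\boldsymbol{t}$. This is precisely the hypothesis required by the permutation lemma.

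With the support-containment property verified, the permutation lemma asserts $\boldsymbol{\bar{C}}=\boldsymbol{C}\boldsymbol{\Pi}_{c}\boldsymbol{\Lambda}_{c}$, and the mode-symmetric versions give $\boldsymbol{\bar{A}}=\boldsymbol{A}\boldsymbol{\Pi}_{a}\boldsymbol{\Lambda}_{a}$ and $\boldsymbol{\bar{B}}=\boldsymbol{B}\boldsymbol{\Pi}_{b}\boldsymbol{\Lambda}_{b}$. Substituting all three back into \eqref{eq:sliceid} and matching the individual rank-one outer products $\boldsymbol{a}_r\circ\boldsymbol{b}_r\circ\boldsymbol{c}_r$ forces the three permutations to coincide, $\boldsymbol{\Pi}_{a}=\boldsymbol{\Pi}_{b}=\boldsymbol{\Pi}_{c}=\boldsymbol{\Pi}$, because each matched component appears simultaneously in all three modes; the same matching forces the product of the three scalings on each component to equal one, i.e. $\boldsymbol{\Lambda}_{a}\boldsymbol{\Lambda}_{b}\boldsymbol{\Lambda}_{c}=\boldsymbol{I}$. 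Uniqueness of $\boldsymbol{\Pi}$ and of the diagonal factors then follows from the linear independence guaranteed by the positive k-ranks.

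The main obstacle is the permutation lemma itself together with the verification of its support-containment hypothesis. The delicate part is the rank bookkeeping that turns the algebraic identity \eqref{eq:sliceid} into a statement about nested zero patterns: it requires carefully tracking how the budget $2R+2$ in \eqref{Kruskals-condition} is apportioned among the three k-ranks, controlling the k-ranks of the barred factors in relation to the unbarred ones, and ruling out intermediate-rank coincidences. I expect the combinatorial counting underlying the lemma --- showing that no vector can produce strictly more zeros under $\boldsymbol{\bar{C}}^{T}$ than under $\boldsymbol{C}^{T}$ without violating the rank budget --- to be the step demanding the most care.
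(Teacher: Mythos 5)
Your plan follows exactly the strategy of the proof the paper points to: the paper itself gives no argument for Theorem~\ref{theorem1} (its ``proof'' is the citation to \cite{StegemanSidiropoulos07}), and that reference proves Kruskal's condition precisely by the route you describe --- linear combinations of slices, Sylvester/k-rank bounds, Kruskal's permutation lemma, mode symmetry, and a final matching of rank-one components. Your slice identity is correct, and so is the rank bound $\text{rank}(\boldsymbol{A}\,\text{diag}(\boldsymbol{d})\,\boldsymbol{B}^T)\geq \min(w,k_{\boldsymbol{A}})+\min(w,k_{\boldsymbol{B}})-w$, with equality to $w$ when $w\leq\min(k_{\boldsymbol{A}},k_{\boldsymbol{B}})$.

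There is, however, a genuine error in how you state the hypothesis of the permutation lemma, and as written that step would fail. The lemma's hypothesis is a comparison of zero \emph{counts}, not a per-index nesting of zero \emph{sets}: writing $\omega(\cdot)$ for the number of nonzero entries, one must show $\omega(\boldsymbol{C}^T\boldsymbol{t})\leq\omega(\boldsymbol{\bar{C}}^T\boldsymbol{t})$ for every $\boldsymbol{t}$ with $\omega(\boldsymbol{\bar{C}}^T\boldsymbol{t})\leq R-\text{rank}(\boldsymbol{\bar{C}})+1$, and only then conclude $\boldsymbol{\bar{C}}=\boldsymbol{C}\boldsymbol{\Pi}\boldsymbol{\Lambda}_c$. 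Your middle paragraph instead asserts index-wise containment (``wherever $\boldsymbol{C}^T\boldsymbol{t}$ vanishes, $\boldsymbol{\bar{C}}^T\boldsymbol{t}$ vanishes as well''), which is wrong on two counts. First, if such containment held for a rich (generic) family of $\boldsymbol{t}$, then each hyperplane $\{\boldsymbol{t}:\boldsymbol{c}_r^T\boldsymbol{t}=0\}$ would be contained in $\{\boldsymbol{t}:\boldsymbol{\bar{c}}_r^T\boldsymbol{t}=0\}$, forcing $\boldsymbol{\bar{c}}_r\propto\boldsymbol{c}_r$ column by column, i.e.\ $\boldsymbol{\Pi}=\boldsymbol{I}$; since the permutation in the theorem is in general nontrivial, no such nesting can be derived --- the rank argument only ever yields cardinality inequalities, which is exactly why the permutation lemma is needed at all. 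Second, the inequality implied by your nesting, $\omega(\boldsymbol{\bar{C}}^T\boldsymbol{t})\leq\omega(\boldsymbol{C}^T\boldsymbol{t})$, is the \emph{reverse} of what the lemma requires to conclude $\boldsymbol{\bar{C}}=\boldsymbol{C}\boldsymbol{\Pi}\boldsymbol{\Lambda}_c$; notably, your final paragraph (``no vector can produce strictly more zeros under $\boldsymbol{\bar{C}}^T$ than under $\boldsymbol{C}^T$'') states the correct direction and contradicts the middle paragraph. The repair is to drop the nesting language, use the rank bookkeeping under (\ref{Kruskals-condition}) to establish the count inequality in the correct direction on the restricted set of $\boldsymbol{t}$ (this bridge --- passing from the case $\omega(\boldsymbol{C}^T\boldsymbol{t})\leq\min(k_{\boldsymbol{A}},k_{\boldsymbol{B}})$ to the condition on $\omega(\boldsymbol{\bar{C}}^T\boldsymbol{t})$ --- is the hard part of Kruskal's proof), and then invoke the permutation lemma, exactly as in \cite{StegemanSidiropoulos07}.
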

\begin{proof}
Please refer to \cite{StegemanSidiropoulos07}.
\end{proof}

We now discuss how to choose $P$ and $L$ such that the Kruskal's
condition can be met. Note that for the line spectral estimation
problem, all three factor matrices $\boldsymbol{A}$,
$\boldsymbol{B}$, and $\boldsymbol{C}$ are Vandermonte matrices.
Hence the k-rank of each factor matrix is equivalent to the
minimum value of the numbers of columns and rows, i.e.
\begin{align}
k_{\boldsymbol{A}}=&\text{min}(N-L-P+2,K) \nonumber\\
k_{\boldsymbol{B}}=&\text{min}(L,K) \nonumber\\
k_{\boldsymbol{C}}=& \text{min}(P,K) \nonumber
\end{align}
In order to satisfy the Kruskal's condition, we can choose one of
the three dimensions, say $P$, equal to 2, and the other two
dimensions $N-L-P+2$ and $L$ no less than $K$. Note that when
$R=1$, the Kruskal's condition (\ref{Kruskals-condition}) cannot
be satisfied whatever $P$ and $L$ we choose. Nevertheless, the
uniqueness of the CP decomposition also holds for this special
case as long as $\boldsymbol{\mathcal{X}}$ does not contain an
identically zero two-dimensional slice along any mode
\cite{SidiropoulosGiannakis00}.

We would like to emphasize that the tensor
$\boldsymbol{\mathcal{X}}$ is assumed fully observed in Theorem
\ref{theorem1} when discussing the uniqueness of the CP
decomposition. It still remains an open problem whether the
uniqueness holds valid if only a subset of the entries of the
third-order tensor are available/observed, which is exactly the
situation we are concerned in this paper. This will be a topic of
our future investigation.

\begin{figure*}[!t]
 \centering
\begin{tabular}{ccc}
\hspace*{-2ex}
\includegraphics[width=6cm]{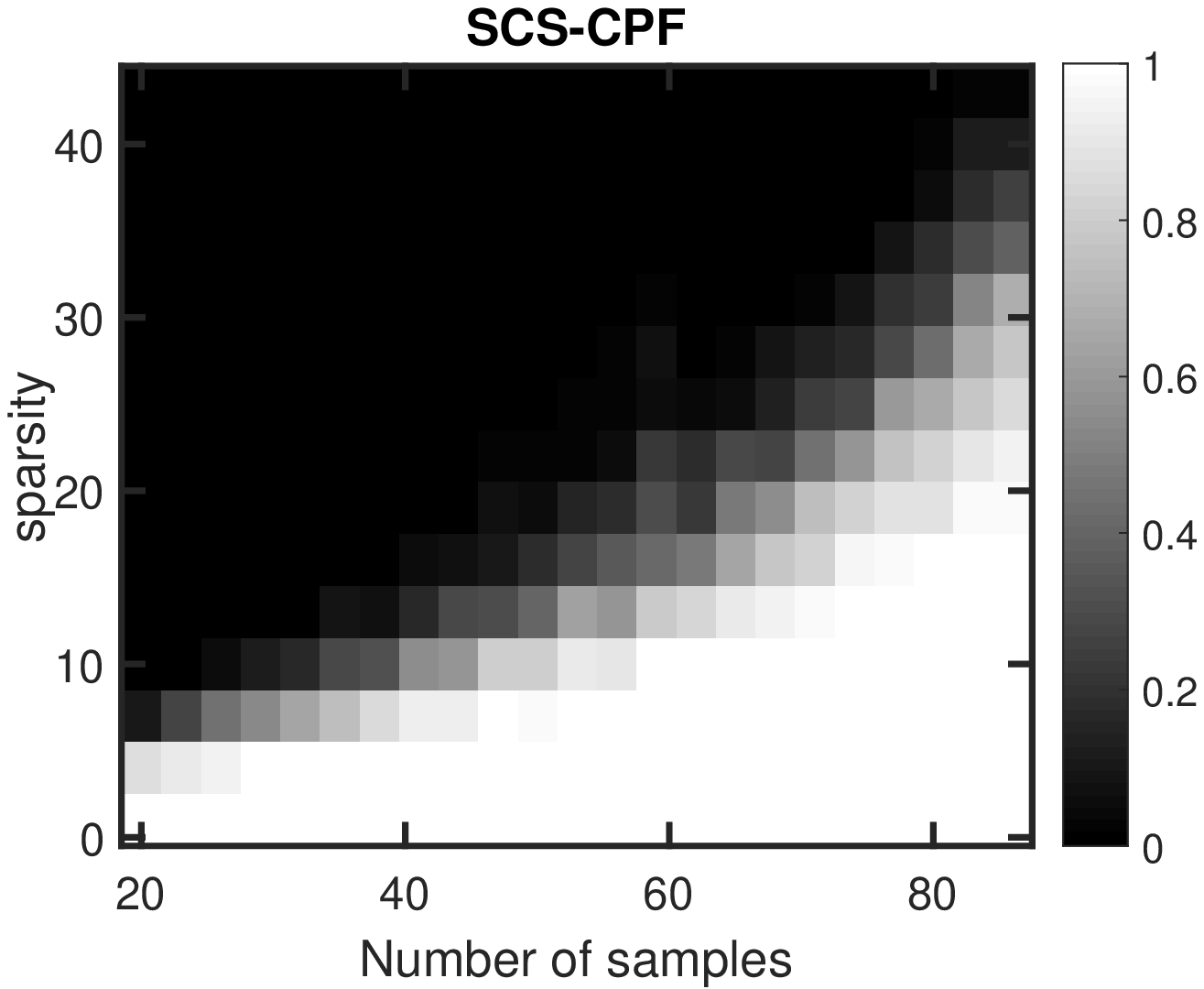} &
\hspace*{-2ex}
\includegraphics[width=6cm]{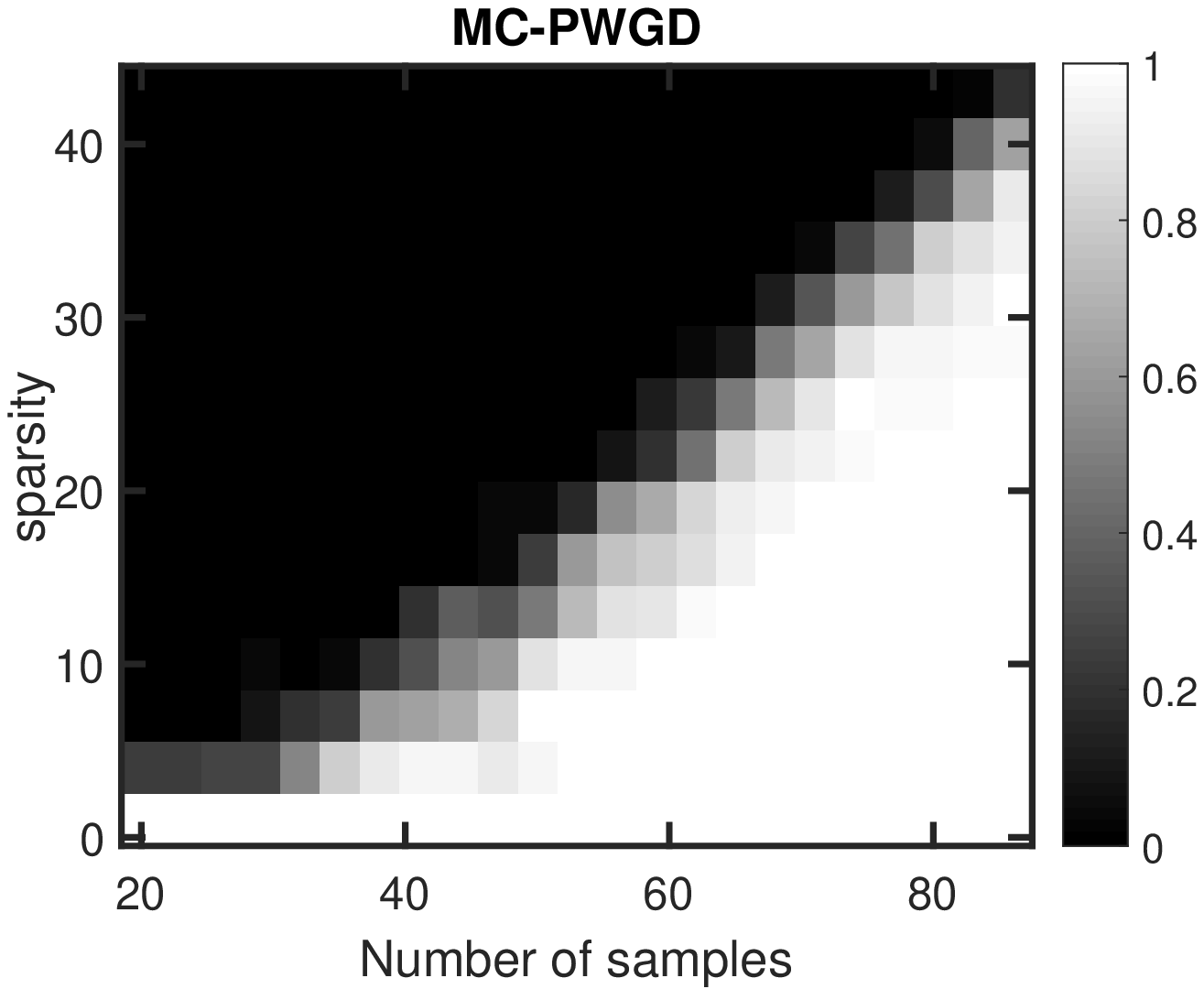} &
\hspace*{-2ex}
\includegraphics[width=6cm]{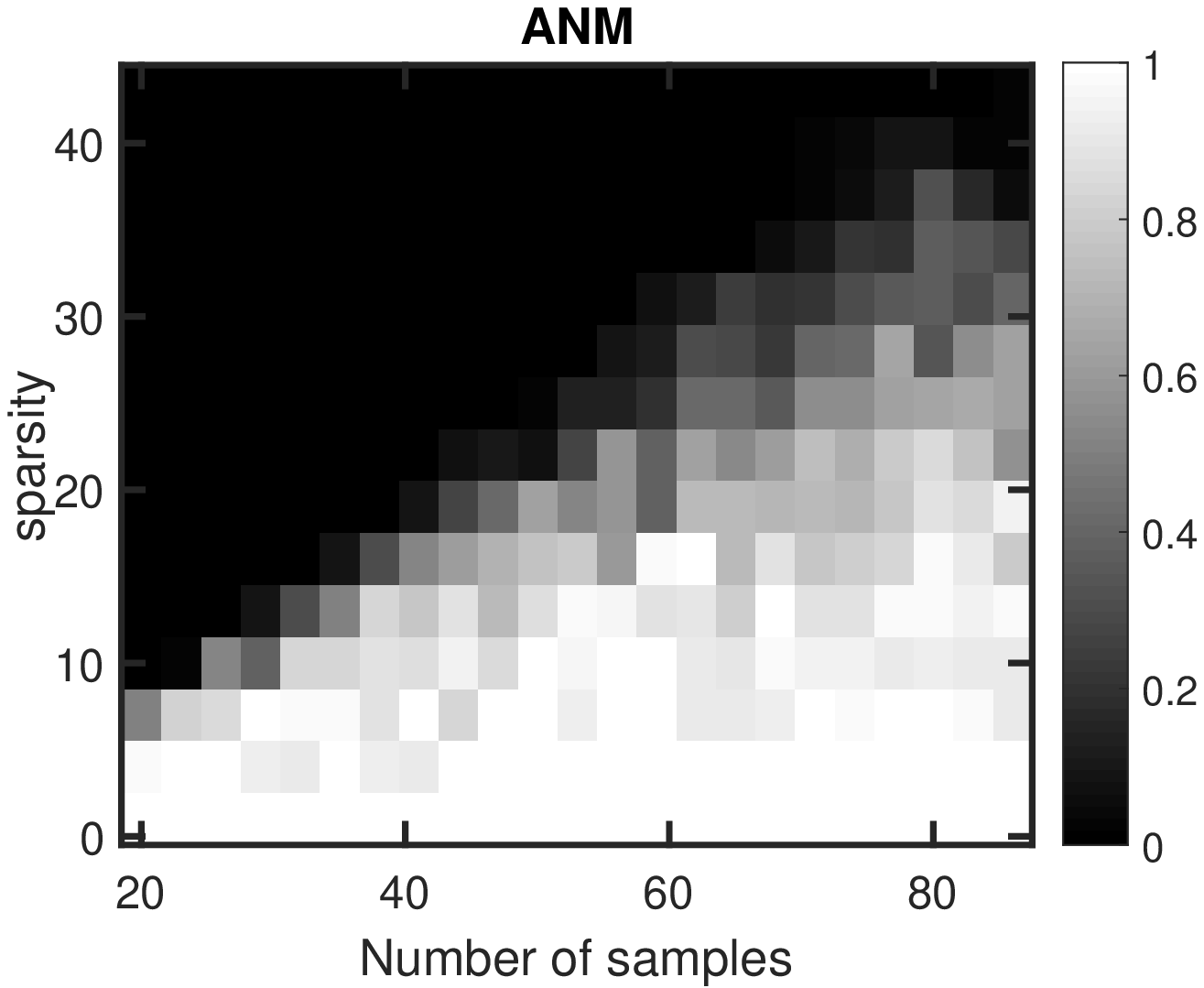}
\end{tabular}
  \caption{Phase transitions of respective algorithms.}
   \label{fig1}
\end{figure*}

\begin{figure*}[!t]
 \centering
\subfigure[RSNRs vs. $M$.]{\includegraphics[width=8cm]{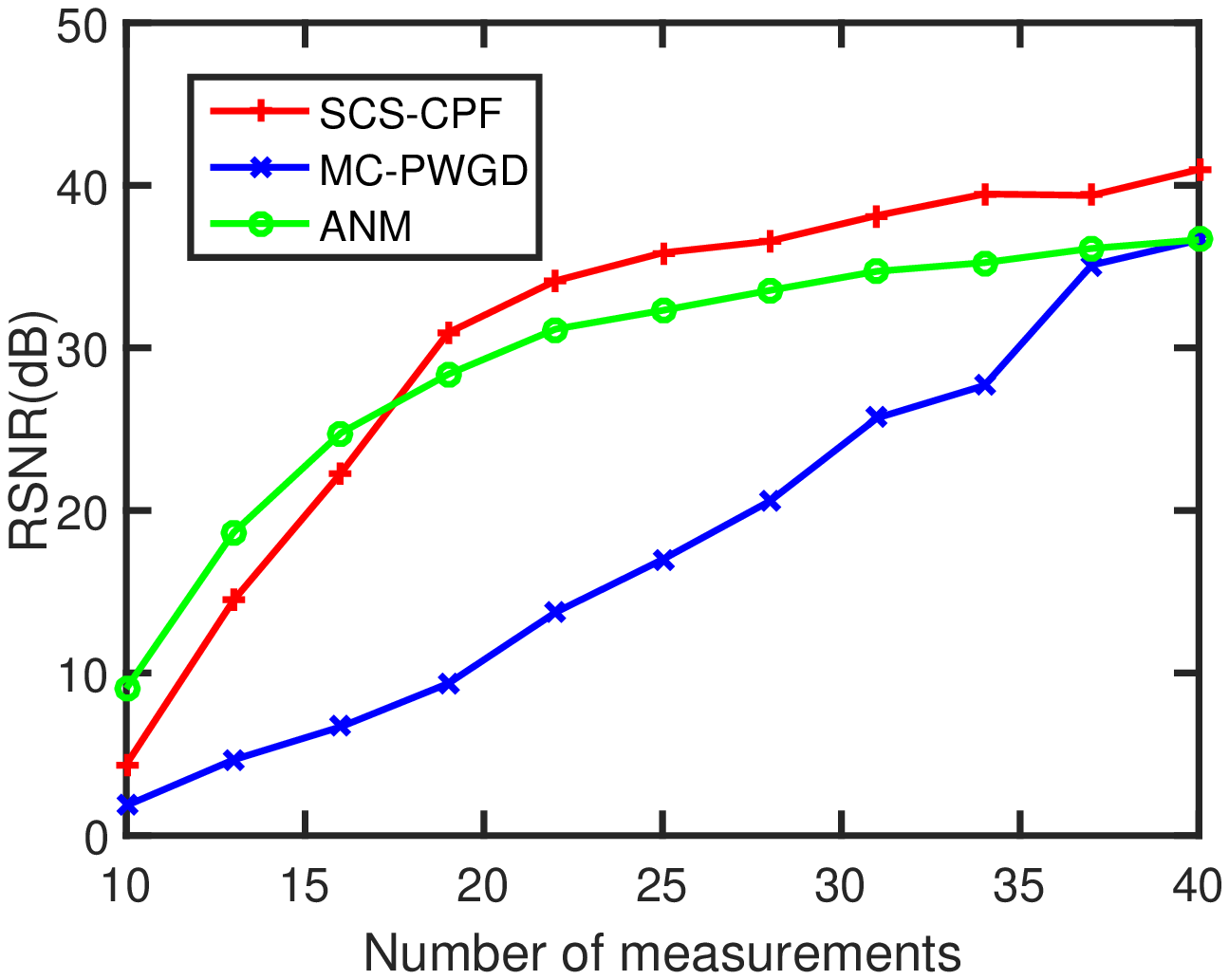}}
 \hfil
\subfigure[RSNR vs. PSNR]{\includegraphics[width=8cm]{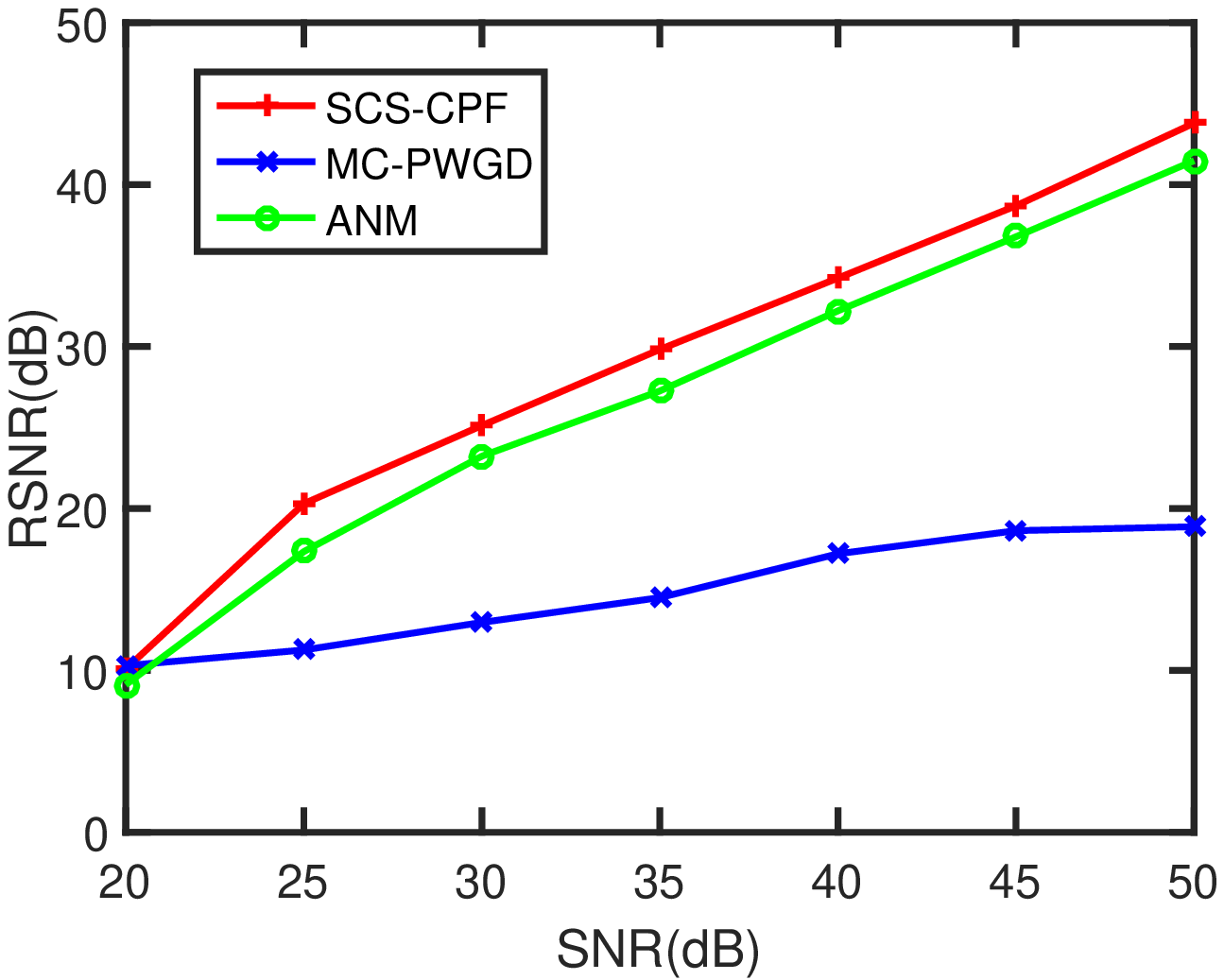}}
  \caption{RSNRs of respective algorithms vs. $M$ and PSNR.}
   \label{fig2}
\end{figure*}

\section{Simulation Results}
We now carry out experiments to illustrate the performance of the
proposed method which is referred to as Spectral Compressed
Sensing via CP Factorization (SCS-CPF). We compare our method with
the Hankel matrix completion method via the projected Wirtinger
gradient descent (MC-PWGD) \cite{CaiLiu15}, and the atomic norm
minimization approach (ANM) \cite{TangBhaskar13,BhaskarTang13}.
For our method, a Bayesian decomposition technique
\cite{ZhaoZhang15} is employed to perform the CP factorization of
the constructed incomplete tensor. The Bayesian algorithm is able
to achieve an automatic determination of the rank of the tensor.
Also, model parameters associated with the Bayesian approach can
be simply chosen and do not require a careful calibration. In our
simulations, we choose $N=127$ as the MC method \cite{CaiLiu15}
requires an odd number of $N$ to form an $(N+1)/2\times (N+1)/2$
low rank Hankel matrix. The frequencies $\{\omega_k\}$ are
uniformly distributed over $[0,2\pi)$ and the amplitudes $\{a_k\}$
are randomly generated according to a normal distribution. The
parameters $P$ and $L$ are chosen to be 2 and 63, respectively,
for our proposed method. Thus $\boldsymbol{\mathcal{X}}$ is of
size $64\times 63\times 2$. Note that for the noisy case, the ANM
method requires the knowledge of the noise variance, which is
assumed perfectly known to the ANM. Also, the MC-PWGD method
requires the knowledge of the number of frequency components,
which is assumed known to the MC-PWGD.

We first consider a noiseless case and plot the phase transition
curve for each algorithm. We vary the sparsity level $K=3:2:43$
and the number of measurements $M=20:3:86$. For each point
$(M,K)$, we conduct 100 independent trials and compute the success
rate. A trial is considered successful if the normalized
reconstruction error is smaller than $10^{-3}$, i.e.
$\|\boldsymbol{x}-\boldsymbol{\hat{x}}\|_2/\|\boldsymbol{x}\|_2<10^{-3}$,
where $\boldsymbol{x}\triangleq
[x_1\phantom{0}x_2\phantom{0}\ldots\phantom{0}x_N]^T$ denotes the
original signal and $\boldsymbol{\hat{x}}$ denotes the estimated
one. In the phase transition plot, the grey value of each point
represents the success rate, with white corresponding to perfect
recovery while black corresponding to complete failure. We can see
from Fig. \ref{fig1} that the proposed SCS-CPF method outperforms
the MC-PWGD method for a small $M$ (e.g. $M\leq 50$), where data
acquisition is more beneficial due to high compression rates. The
ANM method has a sharper transition boundary that the other two
methods. Nevertheless, its transition boundary is highly blurred
and the size of the white area below the transition boundary is
smaller than those of the other two methods, which implies that
the ANM is inferior to the other two methods in terms of perfect
recovery rates.


We now evaluate the recovery performance of respective algorithms
in the presence of additive Gaussian noise. Fig. \ref{fig2}(a)
depicts the reconstruction accuracy as a function of
signal-to-noise ratio (SNR), where we set $K=3$ and $M=25$ in our
experiments. The reconstruction accuracy is measured by the
reconstruction signal-to-noise ratio (RSNR) which is defined as
\begin{align}
\text{RSNR}=20\log_{10}\left(\frac{\|\boldsymbol{x}\|_2}{\|\boldsymbol{x}-
\boldsymbol{\hat{x}}\|_2}\right) \nonumber
\end{align}
The reconstruction accuracy as a function of the number of
measurements $M$ is plotted in Fig. \ref{fig2}(b), where we set
$K=3$ and $\text{SNR}=40\text{dB}$. It can be observed that the
proposed method achieves performance similar to the ANM and is
more robust against noise than the MC-PWGD method.


\section{Conclusions}
The line spectral estimation was studied in this paper. We
proposed a new method which formulates the observed data into a
structured tensor and casts the line spectral estimation problem
as a CP decomposition of incomplete tensors. The underlying
frequency components can be easily identified from the estimated
factor matrices. Simulation results showed that the proposed
method provides competitive recovery performance compared with
existing state-of-the-art algorithms.

\bibliography{newbib}
\bibliographystyle{IEEEtran}

\end{document}